\numberwithin{equation}{section}
\numberwithin{figure}{section}
\theoremstyle{plain}
\newtheorem{thm}{\protect\theoremname}
\theoremstyle{plain}
\newtheorem{prop}[thm]{\protect\propositionname}
\theoremstyle{plain}
\newtheorem{rem}[thm]{\protect\remarkname}
  \newcommand{\curl}{\nabla\times}
  \newcommand{\eps}{\varepsilon}
  \newcommand{\ep}{\epsilon}
  \renewcommand{\div}{\text{div}}
  \newcommand{\R}{\mathbb R}
  \newcommand{\C}{\mathbb C}
  \newcommand{\supp}{\text{supp}}
  \providecommand{\propositionname}{Proposition}
\providecommand{\theoremname}{Theorem}
\providecommand{\remarkname}{Remark}
\begin{document}

\title[Size estimate]{Uniqueness estimates for the general complex conductivity equation and their applications to inverse problems}

\author[Carstea]{C\u at\u alin I. C\^arstea}
\address{ HKUST Jockey Club Institute for Advanced Study, The Hong Kong University of Science and Technology, Clear Water Bay, Kowloon, Hong Kong}
\curraddr{}
\email{catalin.carstea@gmail.com}

\author[Nguyen]{Tu Nguyen}
\address{Institute of Mathematics, Vietnam Academy of Science and Technology,
18 Hoang Quoc Viet, Cau Giay, Hanoi, Vietnam}
\curraddr{}
\email{natu@math.ac.vn}
\thanks{The second author is funded by Vietnam National Foundation for Science and Technology Development (NAFOSTED) under grant number 101.02-2015.21}

\author[Wang]{Jenn-Nan Wang}
\address{Institute of Applied Mathematical Sciences, NCTS, National Taiwan University, Taipei 106, Taiwan}
\curraddr{}
\email{jnwang@math.ntu.edu.tw}
\thanks{The third author was supported in part by MOST 105-2115-M-002-014-MY3}

\keywords{Carleman estimate, three-ball inequalities, size estimate}

\begin{abstract}
The aim of the paper is twofold. Firstly, we would like to derive quantitative uniqueness estimates for solutions of the general complex conductivity equation. It is still unknown whether the \emph{strong} unique continuation property holds for such equations. Nonetheless, in this paper, we show that the unique continuation property in the form of three-ball inequalities is satisfied for the complex conductivity equation under only Lipschitz assumption on the leading coefficients. The derivation of such estimates relies on a delicate Carleman estimate. Secondly, we study the problem of estimating the size of an inclusion embedded inside of a conductive body with anisotropic complex admittivity by one boundary measurement. The study of such inverse problem is motivated by practical problems.
\end{abstract}

\dedicatory{Dedicated to Professor Carlos Kenig on the occasion of his 65th birthday}

\maketitle

\section{Introduction}

The study in this paper is inspired by a problem arising in Electric impedance tomography (EIT).  EIT is a non-invasive method for inferring the interior structure of a specimen by injecting currents and measuring the resulting potential differences on the surface of the specimen. Implementing EIT as a medical imaging technique was first introduced by Henderson and Webster \cite{hw78}. The mathematical study of EIT was initiated by Calderon \cite{ca80}.  There have been a lot of clinical and non-clinical applications of EIT, see \cite{cin} and references therein for more detailed descriptions.

Ideally, one would like to determine the full information of the interior structure by making boundary measurements. However, according to the uniqueness result of Sylvester and Uhlmann  \cite{su87}, we need infinitely many measurements, which is practically impossible. In some cases of medical diagnosis, one is more interested in determining the characteristic properties of an abnormality surrounded by normal or healthy tissues, for example,  breast cancer detection \cite{cin} and detectability of the degraded region \cite{gr}.  It turns out EIT with one measurement can provide a preliminary assessment of the size of the abnormality \cite{ars}. 

In \cite{ars}, the authors considered a real conductivity equation. However, to study inverse problems related to the biological tissues, it is more appropriate to use a complex conductivity equation. Unlike metallic materials, the electrical conductance within biological tissues is due to ions which exist in intracellular and extracellular fluids.  The intracellular ions are separated by cell membranes which act like capacitors. When a constant voltage (DC) is applied, the currents flow only through extracellular fluids. We characterize such currents as conductance currents $I_c$. At the same time, a constant amount of charge will be stored in the cell membrane capacitors. If now an alternating voltage (AC) is applied, the charges stored in the cell membranes are changing in terms of the frequency, and this results in a flow of current through intracellular fluids. We characterize these currents as displacement currents $I_d$. The total current $I$ is the sum of the conductance and the displacement currents. Note that the displacement current is 90 degrees apart in phase. From Ohms' law, the conductance current $I_c$ depends on the material's conductivity $\sigma$ and the displacement current $I_d$ depends on its permittivity $\eps$. Due to the phase difference, we can see that the total current $I$ is proportional to the product of the complex-valued admittivity $\sigma+i\omega\eps$ and the applied voltage, where $\omega=2\pi f$ and $f$ is the frequency of the applied signal \cite{mph06}. 

Besides  considering these electrical properties of biological tissues, we would like to point out that biological tissues are very inhomogeneous. The cells of biological tissues have different sizes and consist of different compositions. Furthermore, some biological tissues are extremely anisotropic such as bones and muscles. In this paper, the material's inhomogeneity and anisotropy are also taken into account. Finally, for most biological tissues,  $\sigma$ and $\eps$ also depend on the frequency $\omega$. Since we only consider one pair of current-voltage measurement, we will simply choose $\omega=1$.    

Now we are ready to formulate our inverse problem. Let $\Omega$ be a conducting body with an anisotropic background with current-voltage relation is given by
 \[\gamma_0(\nabla u)=(\sigma_0(x)+i\epsilon_0(x))\nabla u(x), \]
where the real symmetric matrices $\sigma_0$ and $\epsilon_0$ represent the electrical conductivity and the electrical permittivity, respectively. Assume that $D\subset \Omega$ represents an inclusion strictly embedded in $\Omega$ in which the medium in $D$ is different from the background one $\gamma_0$. Here we will consider a material in $D$ where the current-voltage in $\Omega$ is described by the following constitutive law: 
\begin{equation}\label{exotic}
\gamma_1(\nabla u)=(\sigma_1(x)+i\epsilon_1(x))\nabla u+\zeta_1 \overline{\nabla u},
\end{equation}
where $\sigma_1,\eps_1$, $\zeta_1$ are real symmetric matrix-valued functions. For convenience, we will denote $\zeta_0 \equiv 0$ and assume that the supports of $\sigma_1-\sigma_0$, $\eps_1-\eps_0$, $\zeta_1-\zeta_0$ are contained in $D$. 
The perturbed equation is 
\begin{equation*}
\nabla\cdot(\gamma_1(\nabla u))=0\quad\mbox{in}\quad\Omega.
\end{equation*}

Before proceeding to the description of the inverse problem, we would like to provide a real physical model that gives rise to the relation \eqref{exotic}. In addition to the biological tissues, complex conductivity equations also arise in modeling the propagation of electromagnetic waves. We consider the electromagnetic waves propagating in a chiral medium. We recall the Drude-Born-Fedorov constitutive relations (see for instance \cite[(2.21)]{RSY}):
\begin{equation}\label{de1}
{\bf D}=\eps({\bf E} +\beta\curl{\bf E}),\quad {\bf B}=\mu ({\bf H} + \beta\curl {\bf H}),
\end{equation}
where ${\bf E}, {\bf H}$ are the electric and magnetic fields, and ${\bf D}, {\bf B}$ are the electric and magnetic flux densities, respectively. Also, in the relations, $\eps$ is the electric permittivity, $\mu$ is the magnetic permeability, and $\beta$ is the chirality. The source-free time-harmonic equations are described by
\begin{equation}\label{de2}
\curl{\bf  E}-i\omega{\bf B}=0,\quad\curl{\bf H}+i\omega{\bf D}=0.
\end{equation}
Substituting \eqref{de1} into \eqref{de2} leads to
\begin{equation}\label{de22}
\curl{\bf E}-i\omega\mu({\bf H} + \beta\curl {\bf H})=0,\quad \curl{\bf H}+i\omega\eps({\bf E} +\beta\curl{\bf E})=0,
\end{equation}
which can be combined to obtain
\begin{equation}\label{de3}
\curl{\bf E}=\eta\beta{\bf E}+i\omega\frac{\mu}{(1-k^2\beta^2)}{\bf H},\quad\curl{\bf H}=\eta\beta{\bf H}-i\omega\frac{\eps}{(1-k^2\beta^2)}{\bf E},
\end{equation}
where $\eta=k^2(1-k^2\beta^2)^{-1}$ and $k^2=\omega^2\eps\mu$. Here we consider the case where $\beta$ is real and $\eps$, $\mu$ are complex-valued satisfying that $\eps\mu$ is real and $|\mu|$ is small. Possible choices of $\eps$ and $\mu$ are $\eps=\rho e^{-i\theta}$ and $\mu=\tilde\rho e^{i\theta}$, where $\theta\in(0,2\pi)$, $\rho>0$, $\tilde\rho>0$ with $|\tilde\rho|\ll 1$. For our approximation purpose later, we also assume that $\theta$ is small so that we can ignore the imaginary part of $\xi=\sqrt{\mu/\eps}$. To justify our assumptions, we consider the parameters at different scales. Let $\delta>0$ be sufficiently small, $\rho=O(1), \tilde\rho=O(\delta)$, and $\theta=O(\delta)$. Then we can easily see that $\eps\mu=O(\delta)$ and $\xi=O(\delta^{1/2})+iO(\delta^{3/2})$. So it is not unreasonable to ignore the imaginary part of $\xi$. Since we assume $|\mu|\ll 1$, we may consider $k^2\beta^2<1$. 

In view of the Bohren decomposition \cite[p.87]{RSY}, we write ${\bf E}$ and ${\bf H}$ in terms of the Betrami fields ${\bf Q}_L$, ${\bf Q}_R$, as  
\begin{equation}\label{ql}
{\bf E}={\bf Q}_L-i\xi{\bf Q}_R,\quad{\bf H}=\frac{1}{i\xi}{\bf Q}_L+{\bf Q}_R,
\end{equation}
where $\xi=\sqrt{\mu/\eps}$ is the intrinsic impedance of the chiral medium and ${\bf Q}_L$, ${\bf Q}_R$ are Beltrami fields satisfying
\begin{align*}
\begin{split}
\curl {\bf Q}_L&=\gamma_L{\bf Q}_L,\quad\gamma_L=k(1-k\beta)^{-1},\\
\curl {\bf Q}_R&=-\gamma_R{\bf Q}_R,\quad\gamma_G=k(1+k\beta)^{-1}.
\end{split}
\end{align*}
Here $\gamma_L$ and $\gamma_R$ are the wave numbers of the Beltrami fields ${\bf Q}_L$ and ${\bf Q}_R$, respectively. Since $\gamma_L$ and $\gamma_R$ are assumed to be real, it suffices to choose real ${\bf Q}_L$, ${\bf Q}_R$. Reasoning as above, the imaginary part of $\xi$ is negligible, $\xi$ could be considered as real-valued.  Consequently, we can approximately set
\[
{\bf H}=-\frac{i}{\xi}\overline{\bf E}.
\]
Substituting this into the second equation of \eqref{de3} implies
\[
\curl{\bf H}=-i\frac{\eta\beta}{\xi}\overline{\bf E}-\frac{i\omega\eps}{(1-k^2\beta^2)}{\bf E},
\]
which gives
\begin{equation}\label{de5}
\nabla\cdot\left(\frac{\eta\beta}{\xi}\overline{\bf E}+\frac{\omega\eps}{(1-k^2\beta^2)}{\bf E}\right)=0.
\end{equation}
Taking into account of the fact that $|\mu|$ is small, from the first equation of \eqref{de22}, we set $\curl{\bf E}=0$. In other words, we can find a potential function $u$ such that ${\bf E}=\nabla u$. Replacing ${\bf E}$ by $\nabla u$ in \eqref{de5} gives the current-voltage relation \eqref{exotic}.

Given a Neumann boundary data $h\in L^2(\partial\Omega)$ satisfying $\int_{\partial\Omega}h=0$,  let $u_0$ be the unique solution of the unperturbed equation 
\begin{equation}\label{unpert}
\left\{
\begin{aligned}
&\nabla\cdot(\gamma_0\nabla u_0)=0\quad\text{in}\quad\Omega,\\
&\gamma_0\nabla u_0\cdot\nu=h\quad\text{on}\quad\partial\Omega,\\
&\int_{\Omega}u_0=0,
\end{aligned}
\right.
\end{equation}
and $u$ be the solution of the perturbed equation with the same boundary data, i.e.
\begin{equation}\label{pert}
\left\{
\begin{aligned}
&\nabla\cdot(\gamma_1(\nabla u))=0\quad\text{in}\quad\Omega,\\
&\gamma_1(\nabla u)\cdot\nu=\gamma_0\nabla u\cdot\nu=h\quad\text{on}\quad\partial\Omega,\\
&\int_{\Omega}u=0.
\end{aligned}
\right.
\end{equation}
The inverse problem studied here is to estimate the size of the inclusion $D$ by one pair of boundary measurement $\{u|_{\partial\Omega},h\}$. More precisely, one tries to estimate  $|D|$ using the power
gap $\delta W=W_0-W$, where
\[
W=\int_{\partial\Omega}u h,\quad W_0=\int_{\partial\Omega}u_0 h.
\]
Previously, the size estimate problem for the conductivity equation with complex coefficients was studied by Beretta, Francini, and Vessella \cite{bfv} where they assumed  either $\gamma_0$, $\gamma_1=\sigma_1+i\ep_1$ (with $\zeta_1=0$) are complex constants or $\gamma_0$ is real valued.

Similar to the arguments used in \cite{ars}, our method relies on some quantitative uniqueness estimates of the solution $u_0$. Therefore, in the first part of the paper we study local behaviors of solutions to the following inequality
\begin{equation}\label{de}
|Pu|\le K(|\nabla u|+|u|),
\end{equation}
 where  $Pu=\sum_{jk}p_{jk}(x)\partial_{jk}u$ is a  second order elliptic equation with complex, symmetric Lipschitz coefficients $p_{jk}(x)=a_{jk}(x)+ib_{jk}(x)$. We assume the following ellipticity condition
\begin{equation}\label{ellip}
\lambda\left|\xi\right|^{2}\leq\sum_{jk}a_{jk}(x)\xi_{j}\bar{\xi}_{k}\leq\lambda^{-1}\left|\xi\right|^{2}, \quad |p_{jk}(x)|\leq \lambda^{-1},\quad\forall\ x\in\R^{n},\ \xi\in\C^{n}
\end{equation}
for some $\lambda<1$. 

When $P$ has real coefficients, i.e. $b_{jk}\equiv 0$,  qualitative and quantitative forms of strong unique continuation for \eqref{de} are well-known. These results cease to hold in the general case of complex coefficients, as Alinhac \cite{al} has constructed such operators of any order in $\R^2$ without the strong unique continuation property. The principal symbols of these operators have two simple, non-conjugate complex characteristics.  However,  it was proved in \cite{cgt} that the strong unique continuation property holds when $P$ has Gevrey coefficients.

One of the aims of the paper is to prove the following  three-ball inequality for functions satisfying \eqref{de}.
\begin{thm}\label{T1}
Assume that \eqref{ellip} holds, $\|\nabla p\|_{L^{\infty}(\R^{n})}\le L$. Let  $R=\min\{1,\frac{\lambda}{10nL}\}$.  Then there exists positive constant  $s=s(\lambda,K)$ such that if  $u\in H_{loc}^1(\Omega)$ solves \eqref{de} and $ r_{0} <r_{1}< \lambda r_{2}/2 <\sqrt\lambda R/2$ with $B_{r_{2}}(x_0)\subset\Omega$, we have
\begin{equation}\label{3ball}
\int_{B_{r_{1}}(x_0)}|u|^2dx\leq C\left(\int_{B_{r_0}(x_0)}|u|^2dx\right)^{\tau}\left(\int_{B_{r_{2}}(x_0)}|u|^2dx\right)^{1-\tau},
\end{equation}
where $C$ depends on $\lambda,K,r_1/r_0,r_2/r_0,s$ and
\[
\tau=\frac{(2r_1/\lambda)^{-s}-r_2^{-s}}{r_0^{-s}-r_2^{-s}}.
\]
\end{thm}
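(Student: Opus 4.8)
The plan is to reduce Theorem~\ref{T1} to a weighted Carleman estimate for $P$ and then run the classical localize-and-optimize argument. Observe first that a function $u\in H^1_{loc}(\Omega)$ satisfying \eqref{de} automatically lies in $H^2_{loc}(\Omega)$, since $P$ is uniformly elliptic (its real principal part is, by \eqref{ellip}), has continuous coefficients, and $|Pu|\le K(|\nabla u|+|u|)\in L^2_{loc}$; hence $Pu$ makes pointwise sense and the integrations by parts below are licit after cutting off. Translating, we take $x_0=0$ and write $B_r=B_r(0)$; by a dilation we may also normalize the radii, so that all constants below depend only on the ratios $r_1/r_0,r_2/r_0$ (and $\lambda,K,s$). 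Let $\phi(x)=\psi(x)^{-s}$ be the Carleman weight, where $\psi$ is a smooth positive function comparable to $|x|$ whose level sets are ellipsoids adapted to the leading coefficients at $0$, with eccentricity governed by $\lambda$.

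The decisive ingredient, to be proved on its own, is a Carleman estimate of the form
\begin{equation}\label{carl}
\alpha^{3}\!\int \psi^{-2s-2}e^{2\alpha\phi}|v|^{2}\,dx+\alpha\!\int \psi^{-s}e^{2\alpha\phi}|\nabla v|^{2}\,dx\ \le\ C\!\int e^{2\alpha\phi}|Pv|^{2}\,dx,
\end{equation}
valid for all $\alpha\ge\alpha_0$ and all $v\in C^{\infty}_0(B_{\rho}\setminus\{0\})$, where $s=s(\lambda,K)>0$, $\rho$ is comparable to $R$ and chosen so that the hypothesis $r_1<\lambda r_2/2<\sqrt\lambda R/2$ forces $B_{r_2}\subset B_\rho$, and $C,\alpha_0$ depend on $\lambda,K$. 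To prove \eqref{carl} one conjugates, $P_\phi:=e^{\alpha\phi}Pe^{-\alpha\phi}$, splits $P_\phi=P_\phi^{+}+P_\phi^{-}$ into its self-adjoint and skew-adjoint parts, and uses $\|P_\phi v\|_{L^2}^{2}=\|P_\phi^{+}v\|_{L^2}^{2}+\|P_\phi^{-}v\|_{L^2}^{2}+\langle[P_\phi^{+},P_\phi^{-}]v,v\rangle$; one then has to show that the principal part of the commutator $[P_\phi^{+},P_\phi^{-}]$ bounds from below the weighted $H^1$-norm on the left of \eqref{carl}. This positivity is exactly what forces $s$ to be large in terms of $\lambda$, and \emph{I expect establishing it to be the main obstacle}: unlike in the real-coefficient case, the imaginary part $b_{jk}$ of the principal symbol --- merely bounded, with no smallness --- contributes extra second-order terms to $P_\phi^{\pm}$, and one must check that the convexity built into $\phi$ (through $s$) still dominates them. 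The Lipschitz (rather than smooth) regularity of the $p_{jk}$ is milder: the resulting error terms are $O(L)$ and get absorbed because everything is confined to $B_\rho$ with $R\le\lambda/(10nL)$. Finally, the lower-order right side $K(|\nabla v|+|v|)$ of \eqref{de} is absorbed into the left of \eqref{carl} by choosing $\alpha_0$ large in $K$ and using the $\alpha,\alpha^3$ gains.

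Granting \eqref{carl}, we apply it to $v=\chi u$ for a standard cut-off $\chi$ adapted to the three balls: $\chi$ vanishes near the center and outside $B_{r_2}$, equals $1$ on an annular region that, together with $B_{r_0}$, covers $B_{r_1}$, and has $\nabla\chi,\nabla^2\chi$ supported in a thin shell near $\partial B_{r_0}$ and a thin shell near $\partial B_{r_2}$, with the expected size bounds. Writing $P(\chi u)=\chi Pu+[P,\chi]u$, using $|\chi Pu|\le K(|\nabla(\chi u)|+|\chi u|)+CK|u|\,\mathbf 1_{\supp\nabla\chi}$ and the fact that $[P,\chi]u$ is a first-order term supported on $\supp\nabla\chi$, we absorb the $K$-terms into the left of \eqref{carl} as before; Caccioppoli's inequality (valid since $B_{r_2}\subset\Omega$ and $u$ solves \eqref{de}) turns the remaining gradient terms into $L^2$-norms of $u$ over slightly larger balls. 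Since $\phi$ is essentially radially decreasing, replacing $e^{2\alpha\phi}$ by its extremal values on each of the three relevant regions and using $\sqrt\lambda|x|\lesssim\psi(x)\lesssim|x|$, we arrive at an inequality of the shape
\[
\int_{B_{r_1}}|u|^{2}\ \le\ C\Big(e^{2\alpha\,(r_0^{-s}-(2r_1/\lambda)^{-s})}\int_{B_{r_0}}|u|^{2}\ +\ e^{-2\alpha\,((2r_1/\lambda)^{-s}-r_2^{-s})}\int_{B_{r_2}}|u|^{2}\Big),\qquad\alpha\ge\alpha_0,
\]
where the factor $2/\lambda$ reflects the anisotropic distortion of the level sets of $\psi$, and the two exponents have opposite signs precisely because $r_0<r_1$ and $r_1<\lambda r_2/2$.

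To conclude, set $\mathcal A=\int_{B_{r_0}}|u|^2$ and $\mathcal B=\int_{B_{r_2}}|u|^2\ge\mathcal A$. If $\alpha_\ast:=\big(2(r_0^{-s}-r_2^{-s})\big)^{-1}\log(\mathcal B/\mathcal A)\ge\alpha_0$, take $\alpha=\alpha_\ast$, which equalizes the two terms on the right; substituting produces $\int_{B_{r_1}}|u|^2\le 2C\,\mathcal A^{\tau}\mathcal B^{1-\tau}$ with exactly the $\tau$ of the statement. Otherwise $\mathcal B\le e^{2\alpha_0(r_0^{-s}-r_2^{-s})}\mathcal A$, so $\mathcal A$ and $\mathcal B$ are comparable up to a controlled constant and \eqref{3ball} is immediate. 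In both cases the constant depends only on $\lambda,K,s,r_1/r_0,r_2/r_0$, as claimed. This completes the proof modulo \eqref{carl}.
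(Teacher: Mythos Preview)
Your proposal is correct and takes essentially the same approach as the paper: a Carleman estimate with weight $\sim |x|^{-s}$ applied to a cut-off of $u$, followed by the standard two-term optimization in the large parameter $\alpha$. The only differences are cosmetic: the paper first normalizes $a_{jk}(0)=\delta_{jk}$ by a linear change of variables $y=Sx$ (instead of your anisotropic $\psi$, which is equivalent), and it invokes the Carleman estimate as a known result from \cite{cgt,coko} (their Proposition~\ref{car}) rather than sketching a commutator proof---so the ``main obstacle'' you flag is in fact already in the literature.
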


In the second part of the paper, we will investigate the size estimate problem. In this work, we consider the size estimate problem with anisotropic complex-valued $\gamma_0$ and suitable medium $\gamma_1$ in the inclusion $D$. We will study the physically relevant model where the real part of $\gamma_0$ and $\sigma_1, \zeta_1$ are positive-definite. One of the main tasks is to bound the power gap by the "energy" of the unperturbed solution inside the inclusion $D$.   The derivation relies on a key idea. That is, we can express the conductivity equation with complex coefficients in a convex variational form with real coefficients introduced by Cherkaev and Gibiansky \cite{cg94}. However, this technique alone is not sufficient, as the difference of the perturbed and the unperturbed matrices (i.e., the matrices which appear in the constitutive relations) is always indefinite no matter what jump relations one imposes on material parameters. Physically, this indefiniteness is attributed to the dissipation of electric energy. Such indefiniteness makes it impossible to bound the power gap below by the energy of the unperturbed solution in $D$, which is always positive. Considering special material satisfying the current-voltage \eqref{exotic} allows us to overcome the indefiniteness and derive needed energy inequalities under suitable jump condition across the boundary $\partial D$. We would like to point out that the derivation of  energy inequalities in \cite{bfv} only works under their assumption that either $\sigma_0+i\ep_0$, $\sigma_1+i\ep_1$ (with $\zeta_1=0$) are complex constants or $\gamma_0$ is real-valued.

We note that when the admittivities of the inclusion and the surrounding body are known, some bounds for $|D|$ were derived by Thaler, Milton \cite{tm} using the splitting method, and by Kang, Kim, Lee, Li, Milton \cite{kkllm} using the translation method. 

This paper is organized as follows. In Section~2, we will derive the Carleman estimate and three-ball inequalities. Section~3 is devoted to the study of the size estimate problem.

\section{Carleman estimate and consequences}

In this section we will derive the needed three ball inequality. First, we recall the following Carleman estimate of \cite{cgt} (or \cite{coko}). Let $r=\left|x\right|$ and denote $\varphi(x)=r^{-s}$. 
\begin{prop}\label{car}
Let $P=\sum_{jk}p_{jk}(x)\partial_{jk}$ be a second order elliptic operator with symmetric Lipschitz coefficients
$p_{jk}=a_{jk}+ib_{jk}$ in which \eqref{ellip} holds. Assume also that $a_{ij}(0)=\delta_{ij}$. 
Suppose that $\|\nabla p\|_{L^{\infty}(\R^{n})}\le L<\infty$. Then there exist positive constants $s_{0}$,$\beta_{0}$ and $C$
depending only on $n$ and $\lambda$, and $R$ depending on $n$, $\lambda$, $L$, so that if $\beta\geq\beta_{0}$,
$s\geq s_{0}$, for any $u\in H_{0}^{1}(B_{R}\backslash\{0\})$
the following inequality holds
\begin{equation}\label{eq4}
\beta^{3}s^{3}\int r^{-3s-4}e^{2\beta\varphi}|u|^{2}dx+\beta s^{}\int r^{-s-2}e^{2\beta\varphi}\left|\nabla u\right|^{2}dx\leq C\int e^{2\beta\varphi}\left|Pu\right|^{2}dx.
\end{equation}
\end{prop}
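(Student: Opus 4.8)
The plan is to prove \eqref{eq4} by the standard conjugation-and-positive-commutator method, adapted to the complex principal part. First I would conjugate: set $v=e^{\beta\varphi}u$ and reduce the estimate to a coercivity bound for the conjugated operator $P_\beta:=e^{\beta\varphi}Pe^{-\beta\varphi}$, namely $\int|P_\beta v|^2\,dx\ge c\big(\beta^3s^3\int r^{-3s-4}|v|^2\,dx+\beta s\int r^{-s-2}|\nabla v|^2\,dx\big)$ for all $v\in H^1_0(B_R\setminus\{0\})$; the two sides of \eqref{eq4} translate into these weighted norms of $v$ after using $e^{\beta\varphi}\nabla u=\nabla v-\beta v\nabla\varphi$. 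A direct computation gives $P_\beta v=Pv-2\beta\sum_{jk}p_{jk}(\partial_k\varphi)\partial_j v+\beta^2\big(\sum_{jk}p_{jk}\partial_j\varphi\,\partial_k\varphi\big)v-\beta\big(\sum_{jk}p_{jk}\partial_{jk}\varphi\big)v$, and since $\varphi=r^{-s}$ we have the explicit $\nabla\varphi=-s\,r^{-s-2}x$ and $|\nabla\varphi|=s\,r^{-s-1}$, so every term carries a definite power of $\beta$, $s$ and $r$.

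Next I would split $P_\beta=Q_2+Q_1+Q_0$, collecting into $Q_2$ the self-adjoint second-order part together with the real potential $\beta^2\sum_{jk}a_{jk}\partial_j\varphi\,\partial_k\varphi$, into $Q_1$ the skew-adjoint first-order transport part built from $-2\beta\sum_{jk}a_{jk}(\partial_k\varphi)\partial_j$ (suitably symmetrized), and into $Q_0$ the remaining lower-order terms. Expanding $\int|P_\beta v|^2$ produces $\|Q_2v\|^2+\|Q_1v\|^2$ plus the cross term $2\,\mathrm{Re}\langle Q_2v,Q_1v\rangle=\langle[Q_2,Q_1]v,v\rangle$, which is where the positivity originates. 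Computing this commutator with $\varphi=r^{-s}$, the bracket of $\Delta$ with the transport field $\sim\beta s\,r^{-s-2}\,x\cdot\nabla$ feeds the gradient term with weight $\beta s\,r^{-s-2}$, while the bracket of the potential $\sim\beta^2s^2r^{-2s-2}$ with the same transport field yields a multiplication term bounded below by $\beta^3s^3r^{-3s-4}|v|^2$; these are precisely the two surviving main terms of \eqref{eq4}.

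The main obstacle is the genuinely complex principal part: the ellipticity \eqref{ellip} constrains only the real matrix $(a_{jk})$, while the imaginary matrix $(b_{jk})$ need not vanish at the origin and enters both $Q_2$ and $Q_1$, so the naive pseudoconvexity computation is not sign-definite --- indeed strong unique continuation can fail (Alinhac). The key is to exploit the two large parameters: the contributions of $(b_{jk})$ to the commutator and to $\|Q_1v\|^2$, $\|Q_2v\|^2$ carry strictly lower combined powers of $\beta$ and $s$ than the two main terms, or come with an extra factor of $r$, and are therefore absorbed once $s\ge s_0$ and $\beta\ge\beta_0$ are taken large. Simultaneously I would control the variable-coefficient errors using $a_{jk}(0)=\delta_{jk}$ and the Lipschitz bounds $|a_{jk}(x)-\delta_{jk}|\le Lr$, $|\nabla p_{jk}|\le L$: these produce remainders smaller than the leading terms by a factor $Lr\lesssim LR$, which is exactly why $R$ must be chosen small depending on $n,\lambda,L$ so that all such errors are absorbed into the positive part.

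Finally I would convert the coercivity bound for $v$ back into the weighted gradient statement for $u$: from $e^{\beta\varphi}\nabla u=\nabla v-\beta v\nabla\varphi$ one gets $\int r^{-s-2}e^{2\beta\varphi}|\nabla u|^2\lesssim\int r^{-s-2}|\nabla v|^2+\beta^2\int r^{-s-2}|\nabla\varphi|^2|v|^2$, where the last term equals $\beta^2s^2\int r^{-3s-4}|v|^2$ and is dominated by the $\beta^3s^3$ term on the left for $\beta$ large, hence absorbed. I expect the delicate bookkeeping to be the explicit commutator computation in polar form, together with verifying that every $(b_{jk})$-dependent and Lipschitz-error term genuinely loses a power of the large parameters or a factor of $r$ relative to the two main terms that must be preserved.
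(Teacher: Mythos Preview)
The paper does not prove this proposition at all: it is quoted from \cite{cgt} (alternatively \cite{coko}), and the only commentary is the remark that the lower bound $s_0$ can be taken to depend only on $n$ and $\lambda$. So you are attempting more than the paper does. Your conjugation-and-commutator scheme is indeed the framework used in \cite{cgt}, so at that level the approaches coincide.

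The gap is in your handling of the imaginary principal part. After the self-adjoint/skew-adjoint split for the Hermitian inner product, the second-order piece $iB=i\sum b_{jk}\partial_{jk}$ lands in $Q_1$, the first-order piece $-2i\beta\sum b_{jk}(\partial_k\varphi)\partial_j$ lands in $Q_2$, and the imaginary potential $i\beta^2\sum b_{jk}\partial_j\varphi\,\partial_k\varphi$ lands in $Q_1$. Your claim that every $b_{jk}$-contribution ``carries strictly lower combined powers of $\beta$ and $s$, or comes with an extra factor of $r$'' is not correct as stated. For instance, the commutators $[A,\,i\beta^2\sum b_{jk}\partial_j\varphi\,\partial_k\varphi]$ and $[\beta^2\sum a_{jk}\partial_j\varphi\,\partial_k\varphi,\,iB]$ each produce first-order terms with coefficients of size $\beta^2 s^3 r^{-2s-3}$; after Cauchy--Schwarz this gives
\[
\beta^2 s^3\Big(\int r^{-s-2}|\nabla v|^2\Big)^{1/2}\Big(\int r^{-3s-4}|v|^2\Big)^{1/2},
\]
which exceeds the geometric mean $\beta^2 s^2$ of the two target coefficients by a full factor of $s$ and hence cannot be absorbed by taking $\beta,s$ large. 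Nor does it carry an extra $r$, since $b_{jk}(0)$ is not assumed small. The proof in \cite{cgt} does not rely on a crude order count: it passes to polar coordinates, exploits that $\varphi$ is radial, and tracks exact cancellations among these $b$-dependent commutators together with the positivity stored in $\|Q_1v\|^2$ (which now contains $\|Bv\|^2$). You would need to carry out that more structured computation, or otherwise exhibit the cancellations explicitly, to close the estimate.
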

\begin{rem}
Paper \cite{cgt} mainly concerns the strong unique continuation property for the operator $P$ with Gevrey coefficients $p_{jk}(x)$. There, the upper bound of the exponent $s$ in \eqref{eq4} is controlled by the Gevrey degree and its lower is determined by the eigenvalues of $a_{jk}(0)$ and $b_{jk}(0)$. Here we are interested in the quantitative estimate of the unique continuation property. Thus, $s$ can be chosen arbitrarily large. We only need to make sure that the lower bound $s_0$ is a general constant, which can be easily verified from the proof of \eqref{eq4} in \cite{cgt}. 

Carleman estimates for general elliptic equations with complex coefficients were also derived in \cite{br15} and \cite{br16} using H\"ormander's approach. However, it does not seem possible to derive three-ball inequalities like \eqref{3ball} from their estimates.
\end{rem}

Having the Carleman estimate \eqref{eq4}, we now prove the three-ball inequality in Theorem~\ref{T1}.
\begin{proof}[Proof of Theorem \ref{T1}] 
Without loss of generality, we assume $x_0=0$. To apply the Carleman estimate, we first need to make a change of variables. Let $S$ be a matrix satisfies $SA(0)S^t=I$ and let $v(y)=u(S^{-1}y)$, $\tilde{P}(y)=SP(S^{-1}y)S^t$. Then $v$ satisfies $\tilde{p}_{ij}\partial_{y_j y_k}v=0$. Note that we have $\Re \tilde{P}(0)=I$.

 Let $\rho_0 = \sqrt\lambda r_0, \rho_1 = r_1/\sqrt\lambda $, and $\rho_2 = \sqrt\lambda r_2$. Let $0\le \chi \le 1$ be a cut-off function satisfying 
\begin{itemize}
\item $\chi(y)=1$ if $2\rho_{0}/3<|y|< \rho_{2}/2$,
\item $\chi(y)=0$ if $|y|\le \rho_0/2$ or $|y|\ge 2\rho_2/3$, and,
\item $\left| \partial^{\alpha}\chi(y)\right|\le 10|y|^{-\left|\alpha\right|},\, \forall y$, for $\left|\alpha\right|=1,2$.
\end{itemize}
Note that we have $0<\rho_0<\rho_1<\rho_2/2\le R/2$, where $R$ is the constant appears in Proposition~\ref{car}.  Denote $\tilde v=\chi v$. Applying \eqref{eq4} to $\tilde v$ and using \eqref{de}, we have that
\begin{align}\label{eq5}
&\beta^{3}s^{3}\int |y|^{-3s-4}e^{2\beta\varphi}|\tilde v|^{2}dx
+\beta s^{}\int |y|^{-s-2}e^{2\beta\varphi}|\nabla \tilde v|^{2}dx \notag\\
&\leq K^2\int e^{2\beta\varphi}\left(|\tilde v|^{2}+|\nabla\tilde v|^2\right)dx\notag \\
&+C_{\lambda,K}\int_{\{\rho_0/2<|y|<2\rho_0/3\}\cup\{\rho_2/2<|y|<2\rho_2/3\}} |y|^{-4}e^{2\beta\varphi}\left(|v|^{2}+|\nabla v|^2\right)dx,
\end{align}
where $C_{\lambda,K}$ is a constant depending on $\lambda$ and $K$.  Taking $s>K$ , then the first term of the right hand side of \eqref{eq5} is absorbed by its left hand side. Consequently, we obtain
\begin{align}\label{eq6}
&\int_{2\rho_0/3<|y|<\rho_1} |y|^{-3s-4}e^{2\beta\varphi}|v|^{2}dy\notag\\
&\le C_{\lambda,K}\int_{\{\rho_0/2<|y|<2\rho_0/3\}\cup\{\rho_2/2<|y|<2\rho_2/3\}} e^{2\beta\varphi}\left(|y|^{-4} |v|^{2}+|y|^{-2}|\nabla v|^2\right)dy.
\end{align}
It follows that
\begin{align}\label{eq66}
&e^{2\beta \rho_1^{-s}}\rho_1^{-4}\int_{2\rho_0/3<|y|<\rho_1}|v|^{2}dy \\
&\leq C_{\lambda,K}\rho_0^{-4}e^{2\beta(\rho_0/2)^{-s}}\int_{\rho_0/2<|y|<2\rho_0/3}(|v|^{2}+|y|^{2}\left|\nabla v\right|^{2})dy\notag\\
&+C_{\lambda,K}\rho_2^{-4}e^{2\beta(\rho_2/2)^{-s}}\int_{\rho_2/2<|y|<2\rho_2/3}(|v|^{2}+|y|^{2}\left|\nabla v\right|^{2})dy.\notag
\end{align}
Adding $e^{2\beta \rho_{1}^{-s}}\rho_1^{-4}\int_{|y|<2\rho_{0}/3}|v|^{2}dy$ to both sides of \eqref{eq66} and using the Cacciopoli estimate, we obtain that
\begin{eqnarray*}
\int_{|y|<\rho_{1}}|v|^{2}dy \leq C_{\lambda,K}(\rho_1/\rho_0)^{4}e^{2\beta [(\rho_{0}/2)^{-s}-\rho_{1}^{-s}]}\int_{|y|<\rho_{0}}|v|^{2}dy\notag\\
+C_{\lambda,K}(\rho_2/\rho_1)^{4}e^{2\beta [(\rho_{2}/2)^{-s}- \rho_{1}^{-s}]}\int_{|y|<\rho_2}|v|^{2}dy,
\end{eqnarray*}
for all $\beta \ge s$. By standard arguments, we can choose a suitable choice for $\beta$ to deduce that
\[
\int_{|y|<\rho_1}|v|^{2}dy\le C\left(\int_{|y|<\rho_0}|v|^{2}dy\right)^{\tau}\left(\int_{|y|<\rho_2}|v|^{2}dy\right)^{1-\tau},
\]
where
\[
\tau=\frac{(2\rho_1)^{-s}-\rho_2^{-s}}{\rho_0^{-s}-\rho_2^{-s}}
\]
and the constant $C$ depends on $\lambda,K, s, \rho_1/\rho_0,\rho_2/\rho_0$. 

We note that the eigenvalues of $S$ are bounded by $\lambda ^{-1/2}$ and $\lambda ^{1/2}$, hence we have $B_{r_0}\supset S^{-1}B_{\rho_0}$, $B_{r_1}\subset S^{-1}B_{\rho_1}$ and $B_{r_2}\supset S^{-1}B_{\rho_2}$, where $B_{r_1}, B_{r_2}, B_{r_3}$ are balls in the $x$ variables.  Now reversing the change of variables leads to 
\[
\int_{B_{r_1}}|u|^{2}dx\le C\left(\int_{B_{r_0}}|u|^{2}dx\right)^{\tau}\left(\int_{B_{r_2}}|u|^{2}dx\right)^{1-\tau},
\]
where \[
\tau=\frac{(2r_1/\lambda)^{-s}-r_2^{-s}}{r_0^{-s}-r_2^{-s}}.
\]

\end{proof}

\section{Size estimate}

In this section, we study the size estimate problem described in the Introduction. As above, let us denote the background admittivity $\gamma_0=\sigma_0+i\epsilon_0$, where $\sigma_0$, $\epsilon_0$ are symmetric matrix valued functions satisfying for some $\alpha_0>0$
\begin{equation}\label{se0}
\alpha_0^{-1}I\le\sigma_0\le\alpha_0I,\quad \|\eps_0\|_{L^\infty(\Omega)}\le\alpha_0.
\end{equation}  
We further assume that $\gamma_0$ is Lipschitz, i.e.,
\begin{equation}\label{b0}
\|\nabla\gamma_0\|_{L^{\infty}(\Omega)}\le L.
\end{equation}
On the other hand, for the perturbed coefficients, we suppose that there exists $\alpha_1>0$ such that
\begin{equation}\label{se1}
(\sigma_1+\zeta_1)^{-1}\le\alpha_1I,\quad(\sigma_1-\zeta_1)\le\alpha_1I,\quad\|\eps_1\|_{L^\infty(\Omega)}\le\alpha_1.
\end{equation}
Denote $M=(\sigma_1+\zeta_1)^{-1}-\sigma_0^{-1}$, $N =(\sigma_1-\zeta_1)-\sigma_0$. We further assume the following jump conditions: there exist $\beta>0$ such that 
\begin{equation}\label{a0}
\beta I\leq M,\quad \beta I\leq N,\quad\forall\; x\in D\;\, \mbox{a.e.},
\end{equation}
and a constant $\delta=\delta(\alpha_0,\alpha_1,\beta)$ that will be determined later such that
\begin{equation}\label{delta}
\|\eps_1-\eps_0\|_{L^\infty(\Omega)}=\|\eps_1-\eps_0\|_{L^\infty(D)}\le\delta.
\end{equation}
From the fact that $\sigma_0$ is positive-definite and \eqref{a0}, it follows immediately that $(\sigma_1+\zeta_1)^{-1}$ and $(\sigma_1-\zeta_1)$ are positive-definite. 

We now can state the theorem of size estimate. Given a set $A$ and $s>0$, we defines  \[
A_\ell=\{x\in A:\text{dist}(x,\partial A)>\ell\}.
\]
\begin{thm}\label{size}
Assume that $\Omega \subset \mathbb{R}^n$ is open bounded and for some $\ell_0,\ell_1>0$, the inclusion  $D\subset\Omega$  satisfies $\text{dist}(D,\partial\Omega)\ge \ell_0$ and 
\begin{equation}\label{i1}
\quad|D_{\ell_1}|\ge\frac 12|D|.
\end{equation}
Also, let coefficients $\sigma_j, \ep_j, \zeta_j$, $j=0,1$, satisfy \eqref{se0}-\eqref{a0}. Then there exist two positive constants $K_1$, and $K_2$ such that
\[
K_1\frac{\Re\delta W}{\Re W_0}\le|D|\le K_2\frac{\Re\delta W}{\Re W_0},
\]
where $K_1$ depends on $\ell_0,\alpha_0, \alpha_1$ and $K_2$ depends on $\alpha_0$, $\alpha_1$, $ L$, $\beta$, $|\Omega|$, $\ell_1$, and $\|h\|_{L^{2}(\partial\Omega)}/\|h\|_{H^{-1/2}(\partial\Omega)}$.
\end{thm}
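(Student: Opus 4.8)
The plan is to follow the classical size-estimate machinery (Friedman--Vogelius, Alessandrini--Rosset, and Alessandrini--Rosset--Seo) adapted to the complex/anisotropic setting via the Cherkaev--Gibiansky variational reformulation. The first step is to rewrite the complex conductivity problems \eqref{unpert} and \eqref{pert} as real systems. Writing $u_0=f_0+ig_0$ and $u=f+ig$, the equation $\nabla\cdot(\gamma_0\nabla u_0)=0$ together with its conjugate becomes a $2\times2$ real elliptic system for $(f_0,g_0)$, and similarly for $(f,g)$ (the term $\zeta_1\overline{\nabla u}$ contributes the symmetric off-diagonal blocks). The key point, due to \cite{cg94}, is that this real system admits a variational principle whose associated quadratic form is \emph{positive-definite} precisely because $\sigma_0$ and $\Re\gamma_0$ (resp. $\sigma_1\pm\zeta_1$) are positive-definite. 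Concretely, $\Re W_0$ and $\Re W$ are (up to sign and constants) the energies of these convex functionals, and the difference $\Re\delta W$ can be written as an integral over $D$ of a quadratic expression in $\nabla u_0$ and $\nabla u$ with coefficient matrix built from $M=(\sigma_1+\zeta_1)^{-1}-\sigma_0^{-1}$ and $N=(\sigma_1-\zeta_1)-\sigma_0$.

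Second, I would derive the two-sided \emph{energy inequality}
\[
C_1\int_D|\nabla u_0|^2\,dx\;\le\;\Re\delta W\;\le\;C_2\int_D|\nabla u_0|^2\,dx,
\]
with $C_1$ depending on $\ell_0,\alpha_0,\alpha_1$ and $C_2$ on $\alpha_0,\alpha_1,\beta$. The upper bound is the easy direction: it follows from the minimality/monotonicity of the convex functionals (comparing the energy of $u$ against the test field $u_0$) together with the bounds \eqref{se0}--\eqref{se1}, yielding $\Re\delta W\le C\int_D|\nabla u_0|^2$. The lower bound is where the jump conditions \eqref{a0} and the smallness \eqref{delta} of $\|\eps_1-\eps_0\|_{L^\infty(D)}$ enter: after expressing $\Re\delta W$ through the Cherkaev--Gibiansky form, the positive contributions come from $\beta I\le M$ and $\beta I\le N$, while the indefinite cross terms carrying $\eps_1-\eps_0$ are absorbed by choosing $\delta=\delta(\alpha_0,\alpha_1,\beta)$ small enough. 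This is the step that genuinely requires the special structure \eqref{exotic}: without the $\zeta_1$ term the difference matrix would be indefinite and no such lower bound would survive. One also needs $\Re W_0\sim\int_\Omega|\nabla u_0|^2\sim\|h\|_{H^{-1/2}(\partial\Omega)}^2$ by coercivity and trace estimates.

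Third, I would convert $\int_D|\nabla u_0|^2$ into $|D|$. For the lower bound on $|D|$ one uses a global gradient bound: by Meyers-type or Caccioppoli estimates, $\|\nabla u_0\|_{L^\infty(D)}$ (or an $L^p$ bound with $p>2$) is controlled by $\|h\|_{H^{-1/2}(\partial\Omega)}$ and $\ell_0$, so $\int_D|\nabla u_0|^2\le C|D|\,\|\nabla u_0\|_{L^\infty}^2$, giving $\Re\delta W\le K_2^{-1}|D|\,\Re W_0$ after dividing by $\Re W_0$. For the upper bound on $|D|$ one needs the \emph{lower} bound $\int_D|\nabla u_0|^2\ge c|D|\,\Re W_0$, and this is exactly where Theorem~\ref{T1} is used: the three-ball inequality \eqref{3ball} (applied to $u_0$, which satisfies \eqref{de} since $\gamma_0$ is Lipschitz) upgrades to a \emph{doubling-type} inequality and hence a quantitative lower bound on $\int_{B_r}|\nabla u_0|^2$ for balls $B_r\subset\Omega$ centered at points of $D_{\ell_1}$; covering $D_{\ell_1}$ by such balls and using \eqref{i1} gives $\int_D|\nabla u_0|^2\ge c|D_{\ell_1}|\,\Re W_0\ge \tfrac c2|D|\,\Re W_0$. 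Combining the two directions yields the asserted sandwich. I expect the main obstacle to be the lower energy inequality: pinning down the admissible $\delta$ and verifying that the Cherkaev--Gibiansky quadratic form, after inserting the physically indefinite difference of admittivity matrices, remains bounded below by $\beta$ times the energy requires careful algebra with the $2\times2$ block matrices and is the conceptual heart of the argument.
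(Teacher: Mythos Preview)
Your proposal is correct and follows essentially the same route as the paper: the two-sided energy inequality via the Cherkaev--Gibiansky reformulation (Proposition~\ref{energy}), an interior $L^\infty$ bound on $\nabla u_0$ (using $\mathrm{dist}(D,\partial\Omega)\ge\ell_0$) for the lower bound on $|D|$, and a propagation-of-smallness lemma derived from the three-ball inequality (Proposition~\ref{smallness}) together with a covering of $D_{\ell_1}$ by cubes for the upper bound. Two small corrections: in the energy inequality it is the \emph{lower} constant $C_1$ that carries the dependence on $\beta$ (and neither constant involves $\ell_0$), and the three-ball inequality here does \emph{not} upgrade to a doubling inequality---only to propagation of smallness along chains of overlapping balls---which is precisely why the fatness hypothesis~\eqref{i1} is required in this setting.
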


We will first derive an energy inequality which is crucial in the proof.
\begin{prop}\label{energy}
Assume that \eqref{se0}, \eqref{se1}, \eqref{a0} hold. Then there exist two positive constants $C_1$ and $C_2$ such that
\begin{equation}\label{eineq1}
C_1\int_D|\nabla u_0|^2\le \Re\delta W \le C_2\int_D|\nabla u_0|^2
\end{equation}
where $C_1$ depends on $\alpha_0,\alpha_1,\beta$ and $C_2$ depends on $\alpha_0,\alpha_1$.
\end{prop}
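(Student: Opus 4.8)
The plan is to recast the complex admittivity equation in the real, convex form of Cherkaev and Gibiansky \cite{cg94} and then carry out, in that setting, a standard Alessandrini--Rosset--Seo inclusion estimate.

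\textbf{Step 1: Cherkaev--Gibiansky reduction.} Write $u_0 = u_0' + iu_0''$ and $u = u' + iu''$ for real and imaginary parts. Separating real and imaginary parts, $\nabla\cdot(\gamma_0\nabla u_0)=0$ becomes $\nabla\cdot(\sigma_0\nabla u_0'-\epsilon_0\nabla u_0'')=0$, $\nabla\cdot(\epsilon_0\nabla u_0'+\sigma_0\nabla u_0'')=0$; and, crucially, because of the special law \eqref{exotic}, $\nabla\cdot(\gamma_1(\nabla u))=0$ becomes $\nabla\cdot(\tau_+\nabla u'-\epsilon_1\nabla u'')=0$, $\nabla\cdot(\epsilon_1\nabla u'+\tau_-\nabla u'')=0$ with $\tau_\pm := \sigma_1\pm\zeta_1$, both positive definite on $D$ by \eqref{se0}--\eqref{a0}. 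Pair the background field $\mathbf{E}_0 := (\nabla u_0',\,\epsilon_0\nabla u_0'+\sigma_0\nabla u_0'')$ with the divergence-free current $\mathbf{J}_0 := (\sigma_0\nabla u_0'-\epsilon_0\nabla u_0'',\,\nabla u_0'')$, and likewise $\mathbf{E},\mathbf{J}$ for the perturbed problem; eliminating $\nabla u''$ one gets $\mathbf{J}_0 = \mathcal{A}_0\mathbf{E}_0$, $\mathbf{J} = \mathcal{A}_1\mathbf{E}$ with
\[
\mathcal{A}_0 = \begin{pmatrix}\sigma_0+\epsilon_0\sigma_0^{-1}\epsilon_0 & -\epsilon_0\sigma_0^{-1}\\[2pt] -\sigma_0^{-1}\epsilon_0 & \sigma_0^{-1}\end{pmatrix},\qquad
\mathcal{A}_1 = \begin{pmatrix}\tau_+ + \epsilon_1\tau_-^{-1}\epsilon_1 & -\epsilon_1\tau_-^{-1}\\[2pt] -\tau_-^{-1}\epsilon_1 & \tau_-^{-1}\end{pmatrix}.
\]
Both are real, symmetric, and (completing the square, $\mathcal{A}_j\mathbf{F}\cdot\mathbf{F}$ is a sum of two nonnegative quadratic forms) positive definite with ellipticity constants controlled by $\alpha_0$, resp.\ $\alpha_1$; and $\mathcal{A}_1 = \mathcal{A}_0$ off $D$. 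The divergence theorem plus the boundary conditions (shared by $u_0$ and $u$) identify $\Re W_0 = \int_\Omega\mathcal{A}_0\mathbf{E}_0\cdot\mathbf{E}_0$, $\Re W = \int_\Omega\mathcal{A}_1\mathbf{E}\cdot\mathbf{E}$, and $\mathbf{E}_0,\mathbf{E}$ solve the natural variational problems for $\mathcal{A}_0,\mathcal{A}_1$ over a common affine class (first slot a gradient, second slot divergence free with prescribed normal trace).

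\textbf{Step 2: the jump is definite.} The decisive point is that, under \eqref{a0} and after choosing $\delta = \delta(\alpha_0,\alpha_1,\beta)$ in \eqref{delta} small enough, $\mathcal{A}_0 - \mathcal{A}_1$ is positive definite on $D$: $c_0 I \le \mathcal{A}_0-\mathcal{A}_1 \le C_0 I$ there, with $c_0 = c_0(\alpha_0,\alpha_1,\beta)>0$ and $C_0 = C_0(\alpha_0,\alpha_1)$. Indeed, $N\ge\beta I$ in \eqref{a0} gives $\tau_- \ge \sigma_0+\beta I$, hence the $(2,2)$ block $\sigma_0^{-1}-\tau_-^{-1} \ge \sigma_0^{-1}-(\sigma_0+\beta I)^{-1} = \beta\sigma_0^{-1}(\sigma_0+\beta I)^{-1}$, which is bounded below by a positive multiple of $I$ (these matrices commute); $M\ge\beta I$ gives $\tau_+ \le (\sigma_0^{-1}+\beta I)^{-1}$, so the leading part $\sigma_0-\tau_+$ of the $(1,1)$ block is likewise bounded below, while the remaining $\epsilon$-terms $\epsilon_0\sigma_0^{-1}\epsilon_0-\epsilon_1\tau_-^{-1}\epsilon_1$ contribute a nonnegative piece plus an $O(\delta)$ error that is absorbed once $\delta$ is small. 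A symmetric block matrix both of whose diagonal blocks are positive definite is itself positive definite (the Schur complement inherits the sign: $A>0 \Rightarrow B^\top A^{-1}B\ge 0$). For ordinary complex conductivity ($\zeta_1\equiv 0$) one would have $\tau_\pm = \sigma_1$, and \eqref{a0} would force the two diagonal blocks of $\mathcal{A}_0-\mathcal{A}_1$ to have \emph{opposite} signs: the jump would be indefinite. This is precisely the obstruction noted in the Introduction, and the term $\zeta_1\overline{\nabla u}$ is what removes it.

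\textbf{Step 3: inclusion estimate and conclusion.} With a real, positive, monotone inclusion problem in hand, \eqref{eineq1} is the Alessandrini--Rosset--Seo pair of bounds. One inequality comes from the variational (energy, or dually complementary-energy) characterization of $\mathbf{E}$, tested against the admissible competitor $\mathbf{E}_0$ (resp.\ $\mathbf{J}_0 = \mathcal{A}_0\mathbf{E}_0$): the monotonicity from Step 2 makes the resulting quadratic form coercive and one obtains $\Re\delta W \ge \int_D(\mathcal{A}_0-\mathcal{A}_1)\mathbf{E}_0\cdot\mathbf{E}_0 \ge c_0\int_D|\mathbf{E}_0|^2$. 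The other follows from the exact identity $\Re\delta W = \int_D(\mathcal{A}_0-\mathcal{A}_1)\mathbf{E}_0\cdot\mathbf{E}$ (obtained from two integrations by parts, using $\supp(\mathcal{A}_0-\mathcal{A}_1)\subset D$ and that $\mathbf{E}-\mathbf{E}_0$ has vanishing data in the relevant slot) together with the a~priori bound $\|\mathbf{E}-\mathbf{E}_0\|_{L^2(\Omega)} \le C\|\mathbf{E}_0\|_{L^2(D)}$ (test the equation for $\mathbf{E}-\mathbf{E}_0$ against itself, using the ellipticity and $L^\infty$ bounds of Step 2) and Cauchy--Schwarz: $\Re\delta W \le C_0\|\mathbf{E}_0\|_{L^2(D)}\bigl(\|\mathbf{E}_0\|_{L^2(D)}+\|\mathbf{E}-\mathbf{E}_0\|_{L^2(\Omega)}\bigr) \le C\int_D|\mathbf{E}_0|^2$. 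Finally $|\mathbf{E}_0|^2 = |\nabla u_0'|^2 + |\epsilon_0\nabla u_0'+\sigma_0\nabla u_0''|^2$ is comparable to $|\nabla u_0'|^2+|\nabla u_0''|^2 = |\nabla u_0|^2$ with constants depending only on $\alpha_0$, which converts the two bounds into \eqref{eineq1} with $C_1$ depending on $\alpha_0,\alpha_1,\beta$ and $C_2$ on $\alpha_0,\alpha_1$.

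I expect Step 2 to be the main obstacle: making rigorous that the special constitutive relation \eqref{exotic} together with the jump conditions \eqref{a0} really turns the intrinsically indefinite admittivity jump into a one-signed jump of the real Cherkaev--Gibiansky matrices $\mathcal{A}_j$, and pinning down the admissible $\delta$. Once that is in place, Steps 1 and 3 are routine manipulations with the divergence theorem and with standard variational inequalities.
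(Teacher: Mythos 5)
Your overall strategy is the same as the paper's: pass to the Cherkaev--Gibiansky real symmetric reformulation, show that the special constitutive law \eqref{exotic} together with \eqref{a0} and \eqref{delta} makes the jump of the real matrices one-signed, and then run standard energy identities. (The paper works with the matrices $B_j$ sending $(\Re\gamma_j(\nabla u_j),\Im\nabla u_j)$ to $(\Re\nabla u_j,\Im\gamma_j(\nabla u_j))$, i.e.\ with $\mathcal{A}_j^{-1}$ in your notation, and proves $B_1-B_0>0$; by antimonotonicity of matrix inversion on the positive cone this is equivalent to your $\mathcal{A}_0-\mathcal{A}_1>0$. Its Step 3 is carried out through the exact identities \eqref{basic}--\eqref{id} rather than through variational/competitor arguments, but these are interchangeable.)

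However, Step 2 --- which you yourself single out as the crux --- is justified by a false principle. A real symmetric block matrix whose diagonal blocks are positive definite need not be positive definite: $\left(\begin{smallmatrix}1&2\\2&1\end{smallmatrix}\right)$ already fails. Your parenthetical about Schur complements points the wrong way: positivity of $\left(\begin{smallmatrix}A&B\\B^{\top}&C\end{smallmatrix}\right)$ requires $C-B^{\top}A^{-1}B>0$, and the term $B^{\top}A^{-1}B\ge0$ you invoke is \emph{subtracted}, so it works against you. Nor can the off-diagonal block of $\mathcal{A}_0-\mathcal{A}_1$ be dismissed as negligible: it equals $-\eps_0\sigma_0^{-1}+\eps_1\tau_-^{-1}=-\eps_0(\sigma_0^{-1}-\tau_-^{-1})+O(\delta)$, which is of the same order as the $(2,2)$ block whenever $\eps_0$ is of order one. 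The correct argument is the completing-the-square device you already used to prove positivity of the $\mathcal{A}_j$ themselves, now applied to the difference: in the paper's ($B$-) variables one writes $B_1-B_0=C+D$, where for $X=(u,v)^{\top}$
\[
CX\cdot X=M(u+\eps_1v)\cdot(u+\eps_1v)+Nv\cdot v\ \ge\ \beta\bigl(|u+\eps_1v|^2+|v|^2\bigr),
\]
with $M=(\sigma_1+\zeta_1)^{-1}-\sigma_0^{-1}$, $N=(\sigma_1-\zeta_1)-\sigma_0$, while $D$ collects the terms carrying a factor $\eps_1-\eps_0$ and is $O(\delta)$; the right-hand side controls $|X|^2$ since $\|\eps_1\|\le\alpha_1$, and the $O(\delta)$ piece is absorbed once $\delta=\delta(\alpha_0,\alpha_1,\beta)$ is small. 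With this repair (or, equivalently, deducing $\mathcal{A}_0-\mathcal{A}_1=B_0^{-1}-B_1^{-1}\ge c'I$ from $B_1-B_0\ge cI$ and the two-sided bounds on $B_0,B_1$), the remainder of your argument goes through; only minor care is needed in Step 3 with boundary terms, since the cross pairings $\int_\Omega B_jv_j\cdot v_k=\int_{\partial\Omega}(\Re u_j\Re h+\Im u_k\Im h)$ are not symmetric in $j,k$ and the orthogonality behind the variational inequalities must be checked slot by slot, exactly as in \eqref{basic}--\eqref{id}.
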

\begin{proof}
First we note that for $j=0,1$,
\begin{equation}\label{coneq1}
\begin{pmatrix} \Re\gamma_j(\nabla u_j) \\ \Im\gamma_j(\nabla u_j) \end{pmatrix} = \begin{pmatrix}\sigma_j +\zeta _j &-\eps_j\\ \eps_j&\sigma_j-\zeta _j\end{pmatrix} \begin{pmatrix}\Re \nabla u_j\\ \Im \nabla u_j\end{pmatrix}.
\end{equation}
We want to remind that we set $\zeta_0=0$. Following Cherkaev and Gibiansky \cite{cg94}, we rewrite \eqref{coneq1} as
\begin{equation}\label{coneq2}
\begin{pmatrix} \Re \nabla u_j\\ \Im\gamma_j(\nabla u_j) \end{pmatrix}=\begin{pmatrix}(\sigma_j+\zeta_j)^{-1}&(\sigma_j+\zeta_j)^{-1}\eps_j\\ \eps_j(\sigma_j+\zeta_j)^{-1}&\sigma_j-\zeta_j+\eps_j(\sigma_j+\zeta_j)^{-1}\eps_j\end{pmatrix}\begin{pmatrix}\Re\gamma_j(\nabla u_j) \\ \Im \nabla u_j\end{pmatrix}.
\end{equation}
We denote the square matrix on the right hand side $B_j$ and $v_j=(\Re\gamma_j(\nabla u_j),\Im \nabla u_j)^t$. We can easily check that $B_j$ are positive-definite. Then for $j,k\in\{0,1\}$, we have
\begin{equation}\label{basic}
\begin{aligned}
\int_\Omega B_j v_j\cdot v_k &=  \int_\Omega \Re\nabla u_j \cdot \Re\gamma_k(\nabla u_k) + \Im\gamma_j(\nabla u_j) \cdot \Im \nabla u_k\\
&=  \Re\int_\Omega \div(\gamma_k(\nabla u_k) \Re u_j) + \Im\int_\Omega \div(\gamma_j(\nabla u_j) \Im u_k)\\
&=\int _{\partial\Omega} ( \Re u_j \Re h + \Im u_k \Im h).
\end{aligned}
\end{equation}
Using \eqref{basic} and $B_0=B_0^T$, we obtain 
\begin{equation}
\begin{aligned}
\int_\Omega B_0 (v_0-v_1)\cdot(v_0-v_1)  =& \int_\Omega ( B_0 v_0\cdot v_0 -2 B_0 v_0\cdot v_1 + B_1 v_1\cdot v_1) \\
&+ \int_\Omega (B_0-B_1)v_1\cdot v_1\\
= &\quad \Re\int _{\partial\Omega} (g_1-g_0)h+\int_D (B_0-B_1)v_1\cdot v_1.
\end{aligned}
\end{equation}
Switching the indices 0 and 1, we obtain a similar identity. Thus,
\begin{equation}\label{id}
\begin{aligned}
\Re \delta W&= \int_\Omega B_0 (v_0-v_1)\cdot(v_0-v_1) + \int_D (B_1-B_0)v_1\cdot v_1 \\
&=- \int_\Omega B_1 (v_0-v_1)\cdot(v_0-v_1) + \int_D (B_1-B_0)v_0\cdot v_0, 
\end{aligned}
\end{equation}
where we used the fact that $\supp(B_1-B_0)\subset\bar D$. 
Recall that $B_1$ is positive-definite. The second equality of \eqref{id} gives the second inequality of \eqref{eineq1}.

To prove the first inequality of  \eqref{eineq1}, using the first equality of \eqref{id}, it suffices to show $B_1 -B_0$ is positive-definite. Then the inequality follows by the estimate of $B_0$ and the triangle inequality. To show that $B_1-B_0$ is positive-definite, we can write $B_1 -B_0 = C+D$, where
\[
C= \begin{pmatrix}M &M\eps_1\\ \eps_1 M& N+ \eps_1 M\eps_1\end{pmatrix}, \quad D= \begin{pmatrix}0 &\sigma_0^{-1}(\eps_1-\eps_0)\\(\eps_1-\eps_0)\sigma_0^{-1} &  (\eps_1-\eps_0)\sigma_0^{-1}  \eps_1+ \eps_0 \sigma_0^{-1} (\eps_1-\eps_0)\end{pmatrix}.
\]
For $X=(u,v)^T\in \R^{2n}$ We have 
\[
CX\cdot X=M (u+\eps_1 v)\cdot (u+\eps_1 v)
+ Nv\cdot v
\]
and 
\begin{align*}
\begin{split}
DX\cdot X &=(2u+(\eps_0+\eps_1)v)\cdot \sigma_0^{-1}(\eps_1-\eps_0) v\\
&=2u\cdot\sigma_0^{-1}(\eps_1-\eps_0)v+(\eps_1-\eps_0)v\cdot\sigma_0^{-1}(\eps_1-\eps_0)v+\eps_0 v\cdot\sigma_0^{-1}(\eps_1-\eps_0)v.
\end{split}
\end{align*}
The required positivity then follows from condition \eqref{a0} and \eqref{delta} with a small $\delta$ depending on $\alpha_0,\alpha_1,\beta$.
\end{proof}

Another tool we need is the following propagation of smallness which is a consequence of the Carleman estimate.
\begin{prop}\label{smallness}
Let $u_0$ be the solution of \eqref{unpert}. Assume that \eqref{se0} and \eqref{b0} hold. Then for any $\rho>0$ and every
$x\in\Omega_{4\rho}$, we have
\begin{equation}
\int_{B_{\rho}(x)}|\nabla u_0|^{2}\ge C_{\rho}\int_{\Omega}|\nabla u_0|^{2},\label{lp}
\end{equation}
where $C_{\rho}$ depends on $\alpha_0$, $L$, $|\Omega|$, $\rho$, $\|h\|_{L^{2}(\partial\Omega)}/\|h\|_{H^{-1/2}(\partial\Omega)}$.
\end{prop}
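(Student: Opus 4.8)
The plan is to combine the three-ball inequality of Theorem~\ref{T1} with a chain-of-balls covering argument, plus a trivial lower bound for the energy on one fixed ball coming from elliptic estimates and the non-triviality of the boundary data. First, observe that $u_0$ solves $\nabla\cdot(\gamma_0\nabla u_0)=0$, which we may write in non-divergence form as $\sum_{jk}(\gamma_0)_{jk}\partial_{jk}u_0 = -(\nabla\cdot\gamma_0)\cdot\nabla u_0$, so that $|Pu_0|\le K|\nabla u_0|$ with $P$ given by the complex symmetric Lipschitz matrix $\gamma_0=\sigma_0+i\epsilon_0$, the ellipticity \eqref{ellip} holding with $\lambda$ determined by $\alpha_0$, and $K$ determined by $L$. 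Hence Theorem~\ref{T1} applies to $u_0$ (and, after differentiating the equation or applying interior estimates, a three-ball inequality is available for $\nabla u_0$ via the $L^2$ norm of $u_0$ on slightly larger balls, using Caccioppoli to pass between $\|u_0\|_{L^2}$ and $\|\nabla u_0\|_{L^2}$ on nested balls). It is cleanest to run the argument for $\int|u_0-c|^2$ on balls, where $c$ is a suitable constant, and then convert to $\nabla u_0$ at the end; alternatively one applies the three-ball inequality directly to each first-order derivative $\partial_m u_0$, which also satisfies an inequality of the form \eqref{de}.

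Second, the covering step. Fix $\rho>0$ and $x\in\Omega_{4\rho}$. Let $y\in\Omega_{4\rho}$ be any other point; since $\Omega_{4\rho}$ is open, connect $x$ to $y$ by a path and cover it by a chain of balls $B_{\rho}(x)=B_{\rho}(z_0), B_{\rho}(z_1),\dots,B_{\rho}(z_m)=B_{\rho}(y)$ with $z_{j+1}\in B_{\rho/2}(z_j)$ and $B_{4\rho}(z_j)\subset\Omega$ for all $j$; the number $m$ of balls is controlled by $|\Omega|/\rho^n$ (up to dimensional constants), since the balls $B_{\rho/2}(z_j)$ can be taken with bounded overlap. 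Applying the three-ball inequality on each triple $B_{\rho/2}(z_{j+1})\subset B_{\rho}(z_j)\subset B_{4\rho}(z_j)$ — note $B_{\rho}(z_j)\supset B_{\rho/2}(z_{j+1})$ and the scale conditions $r_0<r_1<\lambda r_2/2<\sqrt\lambda R/2$ are met after possibly shrinking $\rho$ (and the case of large $\rho$ reduces to small $\rho$) — one gets
\begin{equation*}
\int_{B_{\rho/2}(z_{j+1})}|\nabla u_0|^2 \le C\left(\int_{B_{\rho}(z_j)}|\nabla u_0|^2\right)^{\tau}\left(\int_{B_{4\rho}(z_j)}|\nabla u_0|^2\right)^{1-\tau} \le C\left(\int_{B_{\rho}(z_j)}|\nabla u_0|^2\right)^{\tau}\left(\int_{\Omega}|\nabla u_0|^2\right)^{1-\tau}.
\end{equation*}
Iterating this $m$ times and absorbing the global integral (which dominates each local one) gives
\begin{equation*}
\int_{B_{\rho}(y)}|\nabla u_0|^2 \ge \frac{1}{C}\left(\frac{\int_{B_{\rho}(x)}|\nabla u_0|^2}{\int_\Omega|\nabla u_0|^2}\right)^{1/\tau^m}\int_\Omega|\nabla u_0|^2,
\end{equation*}
so smallness propagates from $x$ to every $y\in\Omega_{4\rho}$ with a loss quantified by $\tau^m$.

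Third, the fixed-ball lower bound. Choose $y_0\in\Omega_{4\rho}$ near $\partial\Omega$ — more precisely, work in a boundary collar where the Neumann data is felt. Using the variational characterization $\int_\Omega\sigma_0\nabla u_0\cdot\nabla\bar u_0$ combined with the boundary condition $\gamma_0\nabla u_0\cdot\nu = h$ and the trace/duality inequality, one obtains a lower bound $\int_\Omega|\nabla u_0|^2 \ge c\|h\|_{H^{-1/2}(\partial\Omega)}^2$ from below and $\int_\Omega|\nabla u_0|^2\le C\|h\|_{H^{-1/2}(\partial\Omega)}^2$ from above; then a Caccioppoli-type or three-sphere argument near $\partial\Omega$ (or a straightforward contradiction/compactness argument) shows that the energy cannot concentrate away from a fixed interior ball: there is $y_0\in\Omega_{4\rho}$ and $c_\rho>0$ with $\int_{B_\rho(y_0)}|\nabla u_0|^2\ge c_\rho\|h\|_{H^{-1/2}}^2\ge c_\rho'\int_\Omega|\nabla u_0|^2$, where $c_\rho'$ depends on $\alpha_0,L,|\Omega|,\rho$ and the ratio $\|h\|_{L^2(\partial\Omega)}/\|h\|_{H^{-1/2}(\partial\Omega)}$ (the latter ratio enters because the upper bound on $\int_\Omega|\nabla u_0|^2$ most naturally involves $\|h\|_{H^{-1/2}}$ while a clean interior lower bound near the boundary is easier to state in terms of $\|h\|_{L^2}$, and one converts between them). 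Feeding this into the propagation estimate of the second step, applied with $x$ arbitrary in $\Omega_{4\rho}$ and $y=y_0$ fixed, yields \eqref{lp} with $C_\rho$ depending on the stated quantities.

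The main obstacle is the third step: producing the fixed-ball lower bound $\int_{B_\rho(y_0)}|\nabla u_0|^2\gtrsim\int_\Omega|\nabla u_0|^2$ with the dependence claimed, i.e. ensuring the energy genuinely lives in the interior and does not escape to an arbitrarily thin neighborhood of $\partial\Omega$. This is where the ratio $\|h\|_{L^2(\partial\Omega)}/\|h\|_{H^{-1/2}(\partial\Omega)}$ must be used, and it requires a careful boundary-to-interior estimate (Caccioppoli near the boundary together with the explicit Neumann condition), rather than just the interior three-ball inequality. The covering and iteration in step two is routine once the scale conditions of Theorem~\ref{T1} are checked; the only mild care needed there is to keep $\tau$ bounded away from $0$ uniformly along the chain, which holds because all the radii are comparable to $\rho$.
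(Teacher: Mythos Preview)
Your proposal is essentially correct and follows the same strategy as the paper: convert the three-ball inequality of Theorem~\ref{T1} to one for $\nabla u_0$ via Caccioppoli and Poincar\'e, then propagate along a chain of overlapping balls, and finally invoke a lower bound on the interior energy that brings in the ratio $\|h\|_{L^2(\partial\Omega)}/\|h\|_{H^{-1/2}(\partial\Omega)}$. The only structural difference is in your third step: rather than locating a single ball $B_\rho(y_0)$ carrying a fixed fraction of the energy, the paper covers all of $\Omega_{(4\lambda^{-1}+1)\rho}$ by balls of radius $\rho$, sums the chain estimate over this cover, and then uses the bound $\|\nabla u_0\|_{L^2(\Omega_{(4\lambda^{-1}+1)\rho})}\ge\tfrac12\|\nabla u_0\|_{L^2(\Omega)}$ for small $\rho$; this last inequality is not reproved but simply cited from \cite[pp.~60--61]{ars}, where it is obtained by a boundary Caccioppoli argument controlling the energy in the collar by $\|h\|_{L^2(\partial\Omega)}$. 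Your formulation and the paper's are equivalent (a covering plus the collar bound immediately produces a single good ball, and vice versa), so there is no genuine divergence.

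Two small points to tighten. First, your chain inequality as written mixes centers: the three-ball inequality is stated for concentric balls, so the clean step is to apply it centered at $z_j$ with radii $(\rho,3\rho,4\lambda^{-1}\rho)$ and then use $B_\rho(z_{j+1})\subset B_{3\rho}(z_j)$ (choosing $|z_{j+1}-z_j|\le 2\rho$), which iterates with a fixed radius $\rho$; your version with $B_{\rho/2}(z_{j+1})$ would force you to shrink the radius at each step. Second, your step three is the place where the proof is not self-contained in either your write-up or the paper's; you correctly flag it as the main obstacle, and the honest resolution is to cite the boundary-layer estimate from \cite{ars} rather than sketch a compactness argument.
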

\begin{proof}

We follow the arguments presented in \cite[Lemma~2.2]{ars}.
We first observe that it suffices to consider the case $\rho$ is
small, so we can assume that $\Omega_{\rho}$ is connected. Using
Caccioppoli and Poincar\'e inequalities, we can deduce from Theorem~\ref{T1}
that
\begin{equation}
\|\nabla u_0\|_{L^{2}(B_{3r}(x))}\le C\|\nabla u_0\|_{L^{2}(B_{r}(x))}^{\tau}\|\nabla u_0\|_{L^{2}(B_{4\lambda^{-1}r}(x))}^{1-\tau},\label{3b2}
\end{equation}
where $C$ depends on $\alpha_0,\beta_0$. Given $x,y\in\Omega_{4\lambda^{-1}\rho}$, let $\gamma$ be a curve in $\Omega_{4\lambda^{-1}\rho}$
joining $x$ and $y$. We define a sequence $x_{k}$'s as follows:
Let $x_{1}=x$. For $k>1$, let $x_{k}=\gamma(t_{k})$ where $t_{k}=\max\{t:|\gamma(t)-x_{k-1}|=2\rho\}$
if $|x_{k}-y|>2\rho$; otherwise let $x_{k}=y$, $N=k$ and stop the
process. Note that since the balls $B_{\rho}(x_{k})$ are disjoint,
$N\le N_{0}=\frac{|\Omega|}{\omega_n\rho^{n}}$. Using \eqref{3b2} with $x=x_k$ and $r=\rho$, noting
that $B_{\rho}(x_{k+1})\subset B_{3\rho}(x_{k})$ because $|x_{k+1}-x_{k}|\leq2\rho$,
we can deduce that
\[
\frac{\|\nabla u_0\|_{L^{2}(B_{\rho}(x_{k+1}))}}{\|\nabla u_0\|_{L^{2}(\Omega)}}\le C\left(\frac{\|\nabla u_0\|_{L^{2}(B_{\rho}(x_{k}))}}{\|\nabla u_0\|_{L^{2}(\Omega)}}\right)^{\tau}.
\]
Here $C$ depends on $\alpha_0,\beta_0,\rho$. By induction, we obtain
\begin{equation*}
\frac{\|\nabla u_0\|_{L^{2}(B_{\rho}(y))}}{\|\nabla u_0\|_{L^{2}(\Omega)}}\le C^{1/(1-\tau)}\left(\frac{\|\nabla u_0\|_{L^{2}(B_{\rho}(x))}}{\|\nabla u_0\|_{L^{2}(\Omega)}}\right)^{\tau^{N}}.
\end{equation*}
Since we can cover $\Omega_{(4\lambda^{-1}+1)\rho}$ by no more than $\frac{|\Omega|n^{n/2}}{2^n\rho^{n}}$
balls of radius $\rho$, we obtain
\begin{equation*}
\frac{\|\nabla u_0\|_{L^{2}(\Omega_{(4\lambda^{-1}+1)\rho})}}{\|\nabla u_0\|_{L^{2}(\Omega)}}\le C\left(\frac{\|\nabla u_0\|_{L^{2}(B_{\rho}(x))}}{\|\nabla u_0\|_{L^{2}(\Omega)}}\right)^{\tau^{N_{0}}},
\end{equation*}
where $C$ depends on $\alpha_0$, $\beta_0$, $|\Omega|$, and $\rho$. 

Finally, repeating arguments in \cite[page 60-61]{ars}, we have that there exists $\bar\rho$, depending on $\alpha_0,\beta_0,|\Omega|, \|h\|_{L^2(\partial\Omega)}/\|h\|_{H^{-1/2}(\partial\Omega)}$, such that
\begin{equation}\label{e5}
\frac 12\le\frac{\|\nabla u_0\|_{L^{2}(\Omega_{(4\lambda^{-1}+1)\rho})}}{\|\nabla u_0\|_{L^{2}(\Omega)}}
\end{equation}
for all $0<\rho\le\bar\rho$. Thus the estimate \eqref{lp} holds for all $0<\rho\le\bar\rho$ and for large $\rho$, \eqref{lp} is then obvious. 
\end{proof}

Finally, we give the proof of our main result. 
\begin{proof}[Proof of Theorem \ref{size}]
We follow  the proof of Theorem 2.1 in \cite[page 61]{ars}. By the standard elliptic estimate, we have the interior estimate
$$
\sup_{D}|\nabla u_0|\leq C||\nabla u_0||_{L^2(\Omega)}\leq C \left(\Re W_0\right)^{1/2}.
 $$ 
 Applying the second inequality of Proposition \ref{energy} we obtain
$$
 \Im\delta W \leq C_2|D|\left(\sup_D |\nabla u_0|\right)^2\leq K_1^{-1} |D|\Re W_0, 
$$
which is the first of our needed inequalities.

Now we let $d=\min\left(d_0/2,d_1/2\right)$ and cover $D_{d_1}$ with squares of side $d$ having disjoint interiors, $Q_k\subset D$, $k=1,\ldots,N$. It is clear that $N\geq d^{-2} |D_{d_1}|\ge\frac{1}{2d^2}|D|$. Applying Proposition \ref{smallness} with $\rho=d/2$ we see that $\int_{Q_k}|\nabla u_0|^2\geq C\Re W_0$, hence

$$
\int_D |\nabla u_0|^2\geq \sum_{k=1}^N\int_{Q_k}|\nabla u_0|^2\geq N C\Re W_0 \geq C|D| \Re W_0.
$$
Combining this with  the first inequality of Proposition \ref{energy}, the proof is finished.
\end{proof}

We note that  \eqref{i1} is only used to obtain an upper bound for $|D|$. In \cite{bfv}, an estimate  was obtained without assuming \eqref{i1}. This was possible because under their conditions, $u_0$ satisfies a  doubling inequality, which is not known in the general case considered here.

\end{document}